%%%%%%%%%%%%%%%%%%%%%%%%%%%%%%%%%%%%%%%%%%%%%%%%%%%%%%%%%%%%%%%%%%%%%%%%%%%%%%%%
%2345678901234567890123456789012345678901234567890123456789012345678901234567890
%        1         2         3         4         5         6         7         8

\documentclass[letterpaper,10pt,conference]{ieeeconf}
\IEEEoverridecommandlockouts              % This command is only needed if 
                                          % you want to use the \thanks command

\overrideIEEEmargins                      % Needed to meet printer requirements

% See the \addtolength command later in the file to balance the column lengths
% on the last page of the document

% The following packages can be found on http:\\www.ctan.org
%\usepackage{graphics} % for pdf, bitmapped graphics files
\usepackage{epsfig} % for postscript graphics files
\usepackage{psfrag}
\usepackage{subfigure}
\usepackage{tikz}
\makeatletter
\usepackage{amsmath} % assumes amsmath package installed
\usepackage{amssymb} % assumes amsmath package installed
\usepackage{color}

\usepackage{amsthm}

% operator definitions
\DeclareMathOperator{\Tr}{Tr}
\DeclareMathOperator{\Deg}{deg}

% theorem environment

\newtheorem{theorem}{Theorem}
\newtheorem{remark}{Remark}
\newtheorem{assumption}{Assumption}

% macro to leave ``todo'' notes in textbox

\title{\LARGE \bf
Domain Decomposition for Stochastic Optimal Control
}

\author{Matanya B. Horowitz \quad Ivan Papusha \quad Joel W. Burdick% <--
\thanks{I. Papusha was supported by a Department of Defense NDSEG Fellowship.
The authors are with the Control and Dynamical Systems Department, California
Institute of Technology, Pasadena, CA USA. The corresponding author may be
reached at {\tt\small mhorowit@caltech.edu}
}}%

\begin{document}

\maketitle
\thispagestyle{empty}
\pagestyle{empty}

%%%%%%%%%%%%%%%%%%%%%%%%%%%%%%%%%%%%%%%%%%%%%%%%%%%%%%%%%%%%%%%%%%%%%%%%%%%%%%%%
\begin{abstract}
%Recent results in the study of the Hamilton Jacobi Bellman (HJB) equation have
%led to the discovery of a formulation of the value function as a linear
%Partial Differential Equation (PDE), given several assumptions.  
This work proposes a method for solving linear stochastic optimal
control (SOC) problems using sum of squares and semidefinite programming.
Previous work had used polynomial optimization to approximate the value
function, requiring a high polynomial degree to capture local phenomena. To
improve the scalability of the method to problems of interest, a domain
decomposition scheme is presented. By using local approximations, lower degree
polynomials become sufficient, and both local and global properties of the
value function are captured. The domain of the problem is split into a
non-overlapping partition, with added constraints ensuring $C^1$ continuity.
The Alternating Direction Method of Multipliers (ADMM) is used to optimize over
each domain in parallel and ensure convergence on the boundaries of the
partitions. This results in improved conditioning of the problem and allows for
much larger and more complex problems to be addressed with improved
performance. 
\end{abstract}

%%%%%%%%%%%%%%%%%%%%%%%%%%%%%%%%%%%%%%%%%%%%%%%%%%%%%%%%%%%%%%%%%%%%%%%%%%%%%%%%
\section{Introduction}

Motion planning in the presence of noise and dynamics remains a central issue
in robotics and autonomous systems. As robots transition out of controlled
factory and lab environments, the ability to move precisely in the presence of
unknown environments, exterior agency, and stochastic actuators and sensors
become ever more important. For a solution to be useful it must be rapid to
compute, robust, and incorporate optimality criteria. The primary avenue for
solving motion planning problems, and likely the most successful historically,
has been that of sampling based planners~\cite{LaValle:tb}. Such approaches are
attractive as they may be quite rapid in practice, but typically only have
guarantees in the asymptotic limit, and incorporate dynamics and stochasticity
in only a limited way. 

Stochastic optimal control (SOC) provides an alternative, allowing for the full
dynamics and various details of the problem to be incorporated into the
algorithm directly. Traditionally, this has been handled through
discretization, resulting in the formulation of a Markov Decision Problem
(MDP), which can then be solved through methods such as value
iteration~\cite{Bertsekas:2005}.  These methods have met with a great deal of
success in a number of communities.  The caveat is that such problems in
robotics may be prohibitively difficult to solve due to a number of obstacles,
chiefly the \emph{curse of dimensionality}. These techniques rely on a fine
discretization of the state space when the system occupies a continuous domain,
typical of many robotic and control problems. Furthermore, robotic state spaces
are usually quite large, both in quantity of dimensions as well as absolute
size, for all but the most academic of problems, resulting in discrete state
space cardinality that may easily exceed the capabilities of current computers.
Reducing the necessity for fine discretization could provide for significant
gains in this area. 

Recently it has been discovered that the Hamilton Jacobi Bellman (HJB)
equation, a typically nonlinear partial differential equation (PDE) that arises
in optimal control, may be transformed to a linear PDE given several mild
assumptions. This is a large computational gain, as solving the nonlinear PDE
is quite difficult~\cite{Fleming:2006tl}. Research into leveraging this
computational advantage is only beginning.

One method to solve such problems lies in recent results from polynomial
optimization and semidefinite programming~\cite{Parrilo:2000ui}. These methods
allow for optimization to be performed directly over polynomials, and have
solved a number of difficult problems. Here we present a novel use of such
tools to directly construct an approximate value function that satisfies the
linear HJB equation. This allows for optimal control problems, including those
typically found in robotic motion planning, to be solved relatively quickly and
globally. In contrast to dynamic programming approaches, no direct state space
discretization is required, postponing the curse of dimensionality and
eliminating a potential source of approximation error.

In particular, we propose an augmentation of the algorithm first presented
in~\cite{Horowitz:2014tu}, in which the domain is split into distinct
partitions, each of which has its own local approximating polynomial. The value
function may vary significantly over the domain, and thus may require a high
degree polynomial if approximated over the domain's entirety. But by using a
sufficiently local approximation, a similar quality of global approximation may
be achieved with smaller degree on each partition. Furthermore, we demonstrate
that an efficient choice of partitioning may lead to a decoupling in the
optimal control problems on each partition, allowing for a degree of
parallelization. The Alternating Direction Method of Multipliers
(ADMM)~\cite{Boyd:2011bw} is a particularly well suited approach, providing a
principled method for parallelization of certain convex problems with
convergence guarantees.

\subsection{Related Work}

Linearly solvable SOC problems have recently been studied from two avenues. One
is Linear MDPs~\cite{Todorov:2009wja}, in which an MDP may be solved as a
linear set of equations given several assumptions. By taking the continuous
limit of the discretization, a linear PDE is obtained. Additionally, following
the work begun by Kappen~\cite{Kappen:2005kb}, the same linear PDE has been
found through a particular transformation of the HJB. The existing research has
tended towards developing sampling based approaches for solving the resulting
linear PDE. This is done through the use of the Feynman--Kac Lemma, that allows
for a linear PDE to be solved by examining the diffusion of a stochastic
process. Feynman--Kac approaches have been further developed by Theodorou et
al.~\cite{Theodorou:2010vd} into a path integral framework in use with
dynamic motion primitives.  These results have grown in a number of
compelling directions, either relying on an MDP or sampling based
approach~\cite{Theodorou:2012wx,Theodorou:2010vd,Dvijotham:2011tv}. 

Sampling based approaches are an alternative to the approach presented
here, with several potential advantages and disadvantages.  Among these,
sampling based approaches such as that of Theodorou et al. may be more amenable
in high dimensional state spaces. Such a comparison in part motivates the
present work.

Effort has also gone towards solving the linear HJB directly, as well as
exploiting its properties for computational benefit. In~\cite{Horowitz:2014we}
it is shown that the property of superposition may be used to compute optimal
control solutions at essentially zero computational cost, with significant
implications in solving Linear Temporal Logic (LTL) specified tasks. The work
of~\cite{Horowitz:hd} leverages recent results in sparse tensor decompositions
to formulate a numerical technique that scales \emph{linearly} with dimension,
allowing for the HJB to be approximately solved for a twelve dimensional
system. Finally, in~\cite{horowitz2014efficient} connections are made to a
broader literature, such as \emph{navigation} functions (popular in robotics),
problems of moments, and broader classes of linear PDEs.

The sum of squares approach presented here is connected via duality to problems
of moments. By examining the moments of the HJB, an alternative line of work by
Lasserre et al.~\cite{lasserre2009moments, lasserre2008nonlinear,
lasserre2001global} also reduces optimal control to a semidefinite optimization
problem. In their work, the solution and the optimality conditions are
integrated against monomial test functions, producing an infinite set of moment
constraints. By truncating to any finite list of monomials, the optimal control
problem is reduced to one of semidefinite optimization. Their method is more
general, applicable to any system with polynomial nonlinearities. Our method
contrasts in that we propose candidate solutions of the value function, and
thus avoid the need to include the control signal in our polynomial basis,
lessening the computational burden. We are also able to avoid consideration of
initial and final conditions and measures. Perhaps most importantly, our use of
the linear HJB allows for both upper and lower pointwise bounds to be
constructed to the true solution.

Domain partitioning is an approach that has long been used in numerical
methods for PDEs, from the local analysis behind the Finite Element Method to
multi-scale decomposition techniques~\cite{bjorstad2004domain}.  In control,
these techniques have also arisen to improve local approximation to Lyapunov
functions~\cite{johansson1998computation}, and is complimentary to approaches
that approximate nonlinear systems as piecewise-affine
(PWA)~\cite{biswas2005survey}. Our work may be seen as a continuation of this
research theme, seeking to extend these techniques not only to the study of
stability, as is the case for Lyapunov functions, but to control as well.
Furthermore, the ability to obtain general solutions to HJB has implications
in regards to Control Lyapunov Functions~\cite{Sontag:1983cq}, allowing for
stabilization to be shown alongside near-optimality. Our method has the
distinct advantage over PWA approximations in that the system itself is not
approximated, and the full nonlinear dynamics are incorporated into the
solution.

\subsection{Paper Outline}
The ability to perform domain decomposition for stochastic optimal control will
rely on three main ideas: linear stochastic optimal control, sum of squares
programming, and finally the ADMM algorithm. We begin in Section
\ref{sec:The_Linear_HJB} by reviewing the linear HJB. In Section
\ref{sec:sos_relaxation}, we develop the technique first presented
in~\cite{Horowitz:2014tu} to approximately solve the linear HJB using convex
programming via a sum of squares relaxation. Finally, we build the domain
decomposition procedure in Section \ref{sec:domain_decomposition}. The need to
enforce constraints on the boundaries between partitions then gives rise to our
use of ADMM, which is reviewed in Section \ref{sec:admm} and then applied to
the problem at hand, the main contribution of this paper. We illustrate each
step on a simple nonlinear example in Section \ref{sec:illustrative_example},
before tackling a more sophisticated example in Section
\ref{sec:nonlinear_robot}. Finally, we discuss some of the merits of the
technique and future directions in Section \ref{sec:discussion}.

%%%%%%%%%%%%%%%%%%%%%%%%%%%%%%%%%%%%%%%%%%%%%%%%%%%%%%%%%%%%%%%%%%%%%%%%%%%%%%%%
\section{The Linear Hamilton Jacobi Bellman Equation}
\label{sec:The_Linear_HJB}

We begin by constructing the value function, which captures the
\emph{cost-to-go} from a given state. If such a quantity is known, an optimal
action is chosen to follow the quantity's gradient, bringing the agent into
states with lowest cost over the remaining time horizon.  We define
$x_{t}\in\mathbb{R}^{n}$ as the system state at time $t$, control input
$u_{t}\in\mathbb{R}^{m}$, and dynamics that evolve according to the equation
\begin{equation}
	dx_{t} = 
	\left(f\left(x_{t}\right)+G\left(x_{t}\right)u_{t}\right)dt + 
	B\left(x_{t}\right)d\omega_{t}
	\label{eq:dynamics}
\end{equation}
on a compact domain $\Omega$, where the expressions $f(x)$, $G(x)$, $B(x)$ are
assumed to be smoothly differentiable, but possibly nonlinear functions, and
$\omega_{t}$ is a zero mean Gaussian noise process with covariance
$\Sigma_\epsilon$.  The system has cost $r_{t}$ accrued at time $t$ according
to
\begin{equation}
	r\left(x_{t},u_{t}\right) = 
	q\left(x_{t}\right)+\frac{1}{2}u_{t}^{T}Ru_{t}
	\label{eq:cost}
\end{equation}
where $q(x)$ is a state dependent cost. We require $q(x)\ge0$ for all $x$ in
the problem domain. The goal is to minimize the expectation of the cost
functional 
\begin{equation}
	J(x,u) = \phi_{T}\left(x_{T}\right) + 
	\int_{0}^{T}r\left(x_{t},u_{t}\right)dt,
	\label{eq:trajectory-cost}
\end{equation}
where $\phi_{T}$ represents a state-dependent terminal cost. The solution to
this minimization is obtained from the \emph{value function}. For an
initial point $x_{0}$, it is given by
\begin{equation}
	V\left(x_{0}\right)=
	\min_{u_{[0,T]}}\mathbb{E}\left[J\left(x_{0}\right)\right],
	\label{eq:value-def}
\end{equation}
where we use the shorthand $u_{[0,T]}$ to denote the trajectory of $u(t)$ over
the time interval $t\in[0,T]$.

The associated Hamilton Jacobi Bellman equation, arising from dynamic
programming arguments~\cite{Fleming:2006tl}, is
\begin{equation}
	-\partial_{t}V =
	\min_{u}\Big(r+\left(\nabla_{x}V\right)^{T}f + 
	\frac{1}{2}\Tr\left(\left(\nabla_{xx}V\right)G\Sigma_{\epsilon}G^{T}\right)\Big).
	\label{eq:HJB-raw}
\end{equation}
As the control effort enters quadratically into the cost function, it is a
simple matter to solve for it analytically by substituting~\eqref{eq:cost} into
\eqref{eq:HJB-raw} and taking the gradient, yielding
\begin{equation}
	u^{*}=-R^{-1}G^{T}\left(\nabla_{x}V\right).
	\label{eq:optimal-u}
\end{equation}

The optimal control $u^{*}$ may then be substituted into~\eqref{eq:HJB-raw} to
yield the following nonlinear, second order PDE
\begin{multline}
	\label{eq:HJB-nonlinear}
	-\partial_{t}V = q +\left(\nabla_{x}V\right)^{T}f
	- \frac{1}{2}\left(\nabla_{x}V\right)^{T}GR^{-1}G^{T}\left(\nabla_{x}V\right) \\
	+ \frac{1}{2} \Tr\left(\left(\nabla_{xx}V\right)B\Sigma_{\epsilon}B^{T}\right).
\end{multline}

The difficulty of solving this PDE has traditionally prevented the value
function from being solved for directly. However, as has recently been found
in~\cite{Kappen:2005bn,Todorov:2009wja}, if there exists a scalar
$\lambda > 0$ and a control penalty cost $R\in\mathbb{R}^{n\times n}$
satisfying the noise assumption
\begin{equation}
	\lambda G(x)R^{-1}G(x)^{T}=
	B(x)\Sigma_{\epsilon}B(x)^{T}\triangleq\Sigma_{t},
	\label{eq:noise-assumption}
\end{equation}
then the logarithmic transformation
\begin{equation}
	V=-\lambda\log\Psi\label{eq:log-transform}
\end{equation}
allows us to obtain, after substitution and simplification, the following
linear PDE from equation~\eqref{eq:HJB-nonlinear},
\begin{equation}
	-\partial_{t}\Psi= 
	-\frac{1}{\lambda}q\Psi+f^{T}\left(\nabla_{x}\Psi\right)
	+\frac{1}{2}\Tr\left(\left(\nabla_{xx}\Psi\right)\Sigma_{t}\right).
	\label{eq:HJB-linear}
\end{equation}
Through the transformation $\Psi$, which we call here the
\emph{desirability}~\cite{Todorov:2009wja}, we obtain a computationally
appealing method from which to compute the value function $V$.

\begin{remark}
The noise assumption \eqref{eq:noise-assumption} can roughly be interpreted as
a controllability-type condition: the system controls must span (or
counterbalance) the effects of input noise on the system dynamics. A degree of
designer input is also given up, as the constraint restricts the design of the
control penalty $R$, requiring that control effort be highly penalized in
subspaces with little noise, and lightly penalized in those with high noise.
Additional discussion may be found in~\cite{Todorov:2009wja}.
\end{remark}

The boundary conditions of \eqref{eq:HJB-linear} correspond to the
exit conditions of the optimal control problem. This may correspond
to colliding with an obstacle or goal region, and in the finite horizon
problem there is the added boundary condition of the terminal cost
at $t=T$. These final costs must then be transformed according to
\eqref{eq:log-transform}, producing added boundary conditions to
\eqref{eq:HJB-linear}.

Linearly solvable optimal control is not limited to the finite horizon
setting. Similar analysis can be performed to obtain linear HJB PDEs
for infinite horizon average cost, and first-exit settings, with the
corresponding cost functionals and PDEs shown in Table \ref{tab:control_pdes}.
For convenience, we define the differential operator
\begin{equation}
\label{eq:L}
L(\Psi):=f^{T}\left(\nabla_{x}\Psi\right)+\frac{1}{2}\Tr\left(\left(\nabla_{xx}\Psi\right)\Sigma_{t}\right).
\end{equation}

\begin{table}
\vspace{3mm}
\caption{Linear Desirability PDE for Various Stochastic
Optimal Control Settings, from~\cite{Todorov:2009wja}.}
\label{tab:control_pdes}
\begin{centering}
\begin{tabular}{|c|c|c|}
\hline 
 & Cost Functional & Desirability PDE \\
\hline 
Finite & $\phi_{T}(x_{T})+\int_{0}^{T}r(x_{t},u_{t})dt$ & $\frac{1}{\lambda}q\Psi-\frac{\partial\Psi}{\partial t}=L(\Psi)$\\
\hline 
First-Exit & $\phi_{T_{*}}(x_{T_{*}})+\int_{0}^{T}r(x_{t},u_{t})dt$ & $\frac{1}{\lambda}q\Psi=L(\Psi)$\\
\hline 
Average & $\lim_{T\to\infty}\frac{1}{T}\mathbb{E}\left[\int_{0}^{T}r(x_{t},u_{t})dt\right]$ & $\frac{1}{\lambda}q\Psi-c\Psi=L(\Psi)$\\
\hline
\end{tabular}
\end{centering}
\end{table}

%%%%%%%%%%%%%%%%%%%%%%%%%%%%%%%%%%%%%%%%%%%%%%%%%%%%%%%%%%%%%%%%%%%%%%%%%%%%%%%%
\section{The Sum of Squares Relaxation}
\label{sec:sos_relaxation}
Building upon the results of~\cite{Horowitz:2014tu}, we relax the equality
constraint \eqref{eq:HJB-linear}, allowing for an over-approximation of the
value function, and creating a linear differential inequality. This places the
problem within the realm of polynomial optimization problems where tools such
as the \emph{Positivstellensatz} may be applied. Consider the relaxation
\begin{equation}
\label{eq:relax_hjb}
	\frac{1}{\lambda}q\Psi
	\ge
	\partial_{t}\Psi
	+ f^{T}(\nabla_{x}\Psi)
	+ \frac{1}{2}\Tr\left(\left(\nabla_{xx}\Psi \right)\Sigma_{t}\right).
\end{equation}
Given that this is an approximation, we wish to obtain the best such
approximation for a given polynomial order for $\Psi$, minimizing the pointwise
error as our objective,
\[
	\begin{array}{rl}
		\mbox{min.} & \gamma \\
		\mbox{s.t.} & \displaystyle \gamma - 
			\left(\frac{1}{\lambda} q \Psi -
				\left(\partial_t \Psi + L\left(\Psi \right) \right) \right) \ge 0.
	\end{array}
\]
Furthermore, due to the nature of the log
transformation~\eqref{eq:log-transform}, we require $\Psi$ to be positive
everywhere, and we will examine this problem only on a compact, semialgebraic
domain
$\mathbb{S}$.  

The complete (centralized) optimization problem is
\begin{alignat}{2}
\mbox{min.} \quad & \gamma & \label{eq:sos_pde} \\
\mbox{s.t.} \quad & \frac{1}{\lambda}q\Psi \ge \partial_{t}\Psi+L(\Psi) & x\in\mathbb{S} \nonumber\\
 & \gamma \ge\frac{1}{\lambda}q \Psi- \partial_t \Psi -L(\Psi) & x\in\mathbb{S}  \nonumber\\
 & \Psi \ge e^{-\frac{\phi_T(x)}{\lambda}} & x\in\partial\mathbb{S} \nonumber\\
 & \gamma \ge \Psi - e^{-\frac{\phi_T(x)}{\lambda}} & x\in\partial\mathbb{S} \nonumber
\end{alignat}
The inequalities are interpreted pointwise over $x\in\mathbb{S}$. This set of
polynomial inequalities motivates our need for a method to enforce
non-negativity constraints over a polynomial directly.

\subsection{Sum of Squares Review}
We provide a brief review of sum of squares (SOS) programming, with additional
technical details available in~\cite{Parrilo:2003fh,lasserre2001global}. These
tools will be key in the development of approximate solutions to
\eqref{eq:sos_pde}. 
%In brief, \eqref{eq:HJB-linear} specifies a set of \emph{partial differential}
%equality constraints that the optimal solution must satisfy. To develop
%instead a close approximation these equality constraints may be relaxed to
%inequalities. The optimization problem is then to find the best approximate
%solution that lies in the set of polynomials that satisfy these inequality
%constraints, known as a semialgebraic set. SOS provides a method to perform
%optimization over such a set.

Formally, a \emph{semialgebraic set} is a subset of $\mathbb{R}^{n}$ that is
specified by a finite number of polynomial equations and inequalities.  An
example is the set
\[ 
	\left\{ \left(x_{1},x_{2}\right)\in\mathbb{R}^{2}\mid x_{1}^{2} 
	+ x_{2}^{2}\le1,x_{1}^{3}-x_{2}\le0\right\}. 
\]
Such a set is not necessarily convex, and testing membership in the set is
intractable in general~\cite{Parrilo:2003fh}. As we will see, however, there
exists a class of semialgebraic sets that are in fact
semidefinite-representable. Key to this development is the ability to test for
non-negativity of a polynomial.

A multivariate polynomial $f(x)$ is a \emph{sum of squares} (SOS) if there
exist polynomials $f_{1}(x),\ldots,f_{m}(x)$ such that
\[ 
	f(x)=\sum_{i=1}^{m}f_{i}^{2}(x).
\]
A seemingly unremarkable observation is that a sum of squares is always
positive. Thus, a sufficient condition for non-negativity of a polynomial is
that the polynomial is SOS. Perhaps less obvious is that membership in the set
of SOS polynomials may be tested as a convex problem. We denote the function
$f(x)$ being SOS as $f(x)\in\Sigma(x)$.

\begin{theorem} 
\label{thm:sos-test-sdp}
(\cite{Parrilo:2003fh}) Given a finite set of polynomials 
$\left\{ f_{i}\right\} _{i=0}^{m}\in\mathbb{R}[x]$ the existence of 
$\left\{ a_{i}\right\} _{i=1}^{m}\in\mathbb{R}$ such that
\[
	f_{0}+\sum_{i=1}^{m}a_{i}f_{i}\in\Sigma(x) 
\]
is a semidefinite programming feasibility problem. 
\end{theorem}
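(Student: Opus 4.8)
The plan is to reduce the SOS membership condition to the canonical form of a semidefinite feasibility problem by exploiting the \emph{Gram matrix} representation of SOS polynomials. The starting point is the fact that a polynomial $p(x)$ of degree at most $2d$ belongs to $\Sigma(x)$ if and only if there exists a symmetric positive semidefinite matrix $Q$ with $p(x) = z(x)^T Q z(x)$, where $z(x)$ is the vector of all monomials in $x$ of degree at most $d$. I would establish this equivalence in both directions. For the ``if'' direction, given $Q \succeq 0$, I write $Q = L^T L$ (for instance from its eigendecomposition); then $p(x) = (Lz(x))^T (Lz(x)) = \sum_i (\ell_i^T z(x))^2$ is manifestly a sum of squares. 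For the ``only if'' direction, given $p = \sum_i f_i^2$, I collect the coefficient vector $c_i$ of each $f_i$ in the basis $z$, so that $f_i(x) = c_i^T z(x)$; then $Q = \sum_i c_i c_i^T \succeq 0$ reproduces $p$.

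Next I would convert the polynomial identity $p(x) = z(x)^T Q z(x)$ into a finite system of linear equations in the entries of $Q$. Indexing the monomials by their exponent vectors and expanding the quadratic form gives $z(x)^T Q z(x) = \sum_{\alpha} \big( \sum_{\beta + \gamma = \alpha} Q_{\beta\gamma} \big) x^{\alpha}$, so matching the coefficient of each monomial $x^{\alpha}$ against the corresponding coefficient of $p$ yields exactly one affine equality constraint per monomial. Thus the statement ``$p$ is SOS in the basis $z$'' is equivalent to the existence of a symmetric $Q$ satisfying a fixed finite list of linear equations together with the conic constraint $Q \succeq 0$.

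Finally I would substitute $p = f_0 + \sum_{i=1}^{m} a_i f_i$. Since the coefficients of $p$ depend affinely on the scalars $a_1, \dots, a_m$, the coefficient-matching equations become affine constraints in the joint decision variables $(a, Q)$. The resulting problem, namely to find $a \in \mathbb{R}^m$ and symmetric $Q \succeq 0$ subject to a finite set of linear equalities, is precisely a semidefinite programming feasibility problem, which proves the claim.

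The main point requiring care is the choice of the monomial basis $z$, equivalently the degree $d$. Because the $a_i$ are themselves variables, I would fix $2d$ to be an even integer at least $\max_i \Deg f_i$, so that every admissible $p$ lies in the span of $z(x)$; one may further prune monomials using Newton polytope arguments. I expect the only genuinely subtle issue to be that the Gram matrix $Q$ is not unique, since the map $Q \mapsto z^T Q z$ has a nontrivial kernel, so the admissible $Q$ form an affine subspace rather than a single point. This does not weaken the argument: SOS membership is exactly the assertion that this affine subspace intersects the positive semidefinite cone, and deciding such an intersection is the content of SDP feasibility.
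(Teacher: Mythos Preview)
Your argument is correct and is precisely the standard Gram-matrix reduction that underlies this result. The paper itself does not supply a proof of this theorem; it simply quotes the statement from \cite{Parrilo:2003fh}, so there is nothing to compare against beyond noting that your write-up reproduces the classical proof found in that reference.
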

Here, $\mathbb{R}[x]$ denotes the set of polynomials over $x$ for some fixed
degree. Thus, while the problem of testing non-negativity of a polynomial is
intractable in general, by constraining the feasible set to SOS the problem
becomes tractable. The converse question of whether a non-negative polynomial
is necessarily a sum of squares is unfortunately false, indicating that this
test is conservative~\cite{Parrilo:2003fh}. Nonetheless, SOS feasibility is
sufficiently powerful for our purposes.

\subsection{The Positivstellensatz}

At this point it is possible to determine whether a particular polynomial,
possibly parameterized, is a sum of squares. The next step is to determine how
to combine multiple polynomial inequalities. The answer is given by the theorem
that has come to be known as Stengle's \emph{Positivstellensatz}.

\begin{theorem}[Stengle's Positivstellensatz~\cite{Stengle:1974ix}] 
The set
\begin{multline*}
	X=\big\{ x\mid f_{i}(x) \ge 0,\ h_{j}(x) = 0\\
		\text{ for all } i=1,\ldots,m, j=1,\ldots,p \big\}
\end{multline*}
is empty if and only if there exists $t_{i}\in\mathbb{R}[x]$,
and $s_{i},r_{ij},\ldots\in\Sigma[x]$ such that
\[
	-1=s_{0}+\sum_{i}h_{i}t_{i}+\sum_{i}s_{i}f_{i}
	+\sum_{i\neq j}r_{ij}f_{i}f_{j}+\cdots
\]
\end{theorem}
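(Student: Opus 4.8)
The plan is to prove the two implications separately, with the reverse (``if'') direction being an immediate certificate-checking argument and the forward (``only if'') direction requiring the full apparatus of real algebraic geometry.

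First I would dispatch the easy direction: assume the algebraic identity
\[
	-1 = s_0 + \sum_i h_i t_i + \sum_i s_i f_i + \sum_{i \neq j} r_{ij} f_i f_j + \cdots
\]
holds, and suppose for contradiction that some $\bar{x}\in X$ exists. The conditions $h_j(\bar{x}) = 0$ annihilate every term in the ideal part, while $f_i(\bar{x}) \ge 0$ together with each $s_i, r_{ij}, \ldots$ being a sum of squares (hence nonnegative at $\bar{x}$) force every surviving summand on the right-hand side to be $\ge 0$. Evaluating at $\bar{x}$ then yields $-1 \ge 0$, a contradiction, so $X = \emptyset$.

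For the hard direction I would argue by contraposition. Let $T \subseteq \mathbb{R}[x]$ denote the preordering generated by $f_1,\ldots,f_m$ (all finite sums $\sum_\alpha \sigma_\alpha f_1^{\alpha_1}\cdots f_m^{\alpha_m}$ with $\alpha \in \{0,1\}^m$ and each $\sigma_\alpha \in \Sigma[x]$), and let $I$ be the ideal generated by $h_1,\ldots,h_p$. The claimed identity is precisely the assertion $-1 \in T + I$, so I would instead assume $-1 \notin T + I$ and manufacture a point of $X$. The engine here is the abstract Positivstellensatz for preorderings: in the quotient ring $A = \mathbb{R}[x]/I$, the image of $T$ is a proper preordering, and a Zorn's-lemma argument extends it to a maximal proper preordering, which is necessarily a prime ordering $P$ on $A$. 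The support of $P$ is a real prime ideal; quotienting by it and passing to the fraction field produces an ordered field $(K,\le)$ in which the $f_i$ are nonnegative and the $h_j$ vanish. Embedding $K$ into its real closure $R$, the images $\xi_1,\ldots,\xi_n \in R$ of the coordinate functions give a point with $f_i(\xi) \ge 0$ and $h_j(\xi) = 0$, i.e. $X$ has a solution over the real closed field $R \supseteq \mathbb{R}$.

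The final and hardest step is the Tarski--Seidenberg transfer principle: because the theory of real closed fields is model-complete, the existential first-order sentence asserting that the system $f_i \ge 0,\ h_j = 0$ is solvable holds over $\mathbb{R}$ if and only if it holds over $R$. Since I have exhibited a solution in $R$, there must be one in $\mathbb{R}^n$, contradicting $X = \emptyset$. The main obstacle is exactly this real-algebraic machinery: the nonconstructive passage to an ordering $P$ via Zorn's lemma and the transfer principle jointly replace any naive hope of a direct, degree-bounded manipulation, and they are what guarantee the existence of the certificate while saying nothing about its degree.
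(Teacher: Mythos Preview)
The paper does not actually prove this theorem: it is stated with a citation to Stengle's original 1974 paper and then immediately used as a tool. There is no proof, proof sketch, or argument anywhere in the text to compare your proposal against.

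For what it is worth, your outline is a correct sketch of the standard textbook proof (as in Bochnak--Coste--Roy or Marshall). The certificate-checking direction is exactly right. For the hard direction your plan---assume $-1\notin T+I$, enlarge via Zorn to a maximal proper preordering, pass to the ordered residue field and its real closure, then pull the resulting witness back to $\mathbb{R}^n$ via Tarski--Seidenberg---is the classical route. The one place you elide real content is the claim that a maximal proper preordering is automatically a prime ordering: in a general commutative ring this requires the lemma that maximality forces $P\cup(-P)$ to be the whole ring and $P\cap(-P)$ to be a prime ideal, which is exactly the heart of the abstract Positivstellensatz. With that lemma supplied, your argument goes through.
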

This powerful theorem allows for~\eqref{eq:sos_pde} to incorporate the domain
requirements $x\in\mathbb{S}$ and $x\in\partial\mathbb{S}$.
%This in fact enlarges the feasible set of the problem, as the candidate
%polynomials must now only satisfy the inequalities on a compact domain.
%Furthermore, as will be seen, it allows for constraints to be enforced over
%shared boundaries between partitions.

%%%%%%%%%%%%%%%%%%%%%%%%%%%%%%%%%%%%%%%%%%%%%%%%%%%%%%%%%%%%%%%%%%%%%%%%%%%%%%%%
\section{Domain Decomposition}
\label{sec:domain_decomposition}
We first briefly review ADMM before demonstrating its use in domain
decomposition, following~\cite{Boyd:2011bw}.

\subsection{Alternating Direction Method of Multipliers}
\label{sec:admm}
The Alternating Direction Method of Multipliers (ADMM) will
serve as the basis for enforcing continuity and differentiability of $\Psi(x)$
on the boundaries of the decomposed regions. Other decomposition schemes are
possible, see~\cite{Bertsekas:1976,Bertsekas:1996} for a survey. ADMM is a
``meta''-optimization scheme, where each step is carried out by solving a
convex optimization problem. Consider the optimization
\begin{equation}\label{eq:admm_standard_form}
	\begin{array}{rl}
		\mbox{min.}   & f(x)+g(z) \\
		\mbox{s.t.} & Ax+Bz=c
	\end{array}
\end{equation}
over real vector variables $x$ and $z$, with convex functions $f$ and $g$.
Define an augmented Lagrangian
\[
	L_{\rho}=f(x)+g(z)+y^{T}\left(Ax+Bz-c\right)
		+\frac{\rho}{2}\left\|Ax+Bz-c\right\|_{2}^{2},
\]
where $\rho>0$ is an algorithm parameter, and $y$ is the dual variable
associated with the equality constraint. The constrained optimization is solved
through alternately minimizing the augmented Lagrangian over the primal
variables $x$, $z$, and updating the dual variable $y$,
\begin{eqnarray*}
x^{k+1} & := & \text{argmin}_{x}L_{\rho}(x,z^{k},y^{k})\\
z^{k+1} & := & \text{argmin}_{z}L_{\rho}(x^{k+1},z,y^{k})\\
y^{k+1} & := & y^{k}+\rho\left(Ax^{k+1}+Bz^{k+1}-c\right).
\end{eqnarray*}
% XXX fix the flow of logic here by adding something about decoupling x and z

%\subsection{SDP Formulation}
The sum of squares formalism allows a general polynomial optimization problem
to be converted to a sequence of SDPs, where the variables are the polynomial
coefficients. ADMM extends readily to SDPs. To that end, consider
\[
	\begin{array}{rl}
		\mbox{min.}   & f(x) + g(z) \\
		\mbox{s.t.} 
			& Ax+Bz=c\\
			& x\in\mathcal{C}_1, \quad z\in\mathcal{C}_2,
	\end{array}
\]
where $x,z\in\mathbb{R}^n$ are the variables and $\mathcal{C}_1, \mathcal{C}_2$
are SDP-representable sets. With the same form $L_\rho$, the ADMM iterations
are quadratically penalized SDPs,
\begin{eqnarray*}
x^{k+1} & := & \text{argmin}_{x\in\mathcal{C}_1}L_{\rho}(x,z^{k},y^{k})\\
z^{k+1} & := & \text{argmin}_{z\in\mathcal{C}_2}L_{\rho}(x^{k+1},z,y^{k})\\
y^{k+1} & := & y^{k}+\rho\left(Ax^{k+1}+Bz^{k+1}-c\right).
\end{eqnarray*}
The only difference is the primal variables are now constrained to lie in the
spectrahedra (the convex set of semidefinite constraints~\cite{Boyd:2004uz})
$\mathcal{C}_1$ and $\mathcal{C}_2$.
%Note that at this point the
%optimizations over the disparate variables $x,z$ are not yet disjoint.

The value in this decomposition is the attendant convergence guarantees
obtained with ADMM. In particular, we will make the following two assumptions,
which guarantee convergence:
\begin{assumption}
	\label{as:admm1}
	The (extended real valued) functions $f:\mathbb{R}^n \to \mathbb{R} \cup
	+\infty$ and $g : \mathbb{R}^m \to \mathbb{R} \cup +\infty$ are closed, proper,
	and convex.
\end{assumption}
\begin{assumption}
\label{as:admm2}
	The unaugmented Lagrangian has a saddle point.
\end{assumption}
If it can be demonstrated that the optimization problem obeys these
assumptions, then the following general theorem becomes available:
\begin{theorem}\label{thm:admm_convergence}
(See~\cite{Boyd:2011bw}) Given Assumptions \ref{as:admm1}, \ref{as:admm2} then
the ADMM iterates satisfy the following:
\begin{itemize}
	\item \textbf{Residual convergence}: $r^k \to 0$ as $k \to \infty$, i.e.,
		the iterates approach feasibility 
	\item \textbf{Objective convergence}: $f(x^k) + g(z^k) \to p^*$ as
		$k\to\infty$, i.e., the objective function of the iterates approaches the
		optimal value
	\item \textbf{Dual variable convergence}: $y^k \to y^*$ as $k \to \infty$,
		where $y^*$ is a dual optimal point
\end{itemize}
\end{theorem}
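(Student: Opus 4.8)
The plan is to follow the standard Lyapunov-style argument for ADMM. First I would record the stationarity conditions generated by the two minimization steps. Writing $r^{k+1} := Ax^{k+1}+Bz^{k+1}-c$ for the primal residual, the fact that $x^{k+1}$ minimizes $L_{\rho}(x,z^{k},y^{k})$ and $z^{k+1}$ minimizes $L_{\rho}(x^{k+1},z,y^{k})$, combined with the dual update $y^{k+1}=y^{k}+\rho r^{k+1}$, yields two subgradient facts. The $z$-update makes $0\in\partial g(z^{k+1})+B^{T}y^{k+1}$ hold \emph{exactly} at every iteration, while the $x$-update gives $0\in\partial f(x^{k+1})+A^{T}y^{k+1}+\rho A^{T}B(z^{k}-z^{k+1})$. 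Reading the last term as a dual residual $s^{k+1}:=\rho A^{T}B(z^{k+1}-z^{k})$, these two relations show that satisfaction of the stationarity conditions is controlled entirely by $r^{k+1}$ and $s^{k+1}$.

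Next I would introduce the combined Lyapunov function
\[
	V^{k} := \frac{1}{\rho}\|y^{k}-y^{*}\|_{2}^{2}
		+ \rho\|B(z^{k}-z^{*})\|_{2}^{2},
\]
where $(x^{*},z^{*},y^{*})$ is the saddle point guaranteed by Assumption~\ref{as:admm2}, and establish the key inequality
\[
	V^{k+1} \le V^{k} - \rho\|r^{k+1}\|_{2}^{2}
		- \rho\|B(z^{k+1}-z^{k})\|_{2}^{2}.
\]
This is the crux of the argument and the step I expect to be the main obstacle. To prove it I would start from the unaugmented saddle-point inequality $p^{*}=f(x^{*})+g(z^{*})\le f(x^{k+1})+g(z^{k+1})+(y^{*})^{T}r^{k+1}$ and combine it with the two subgradient inequalities that convexity of $f$ and $g$ (Assumption~\ref{as:admm1}) supplies when evaluated at the saddle point. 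Adding these, the cross terms telescope once the dual update is substituted, and the augmented quadratic penalty furnishes exactly the slack needed to close the bound. The delicate part is the careful bookkeeping of which penalty and cross terms recombine into the telescoping quadratic.

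The three stated conclusions then follow. Summing the Lyapunov inequality over $k=0,1,\ldots$ gives $\sum_{k}\big(\rho\|r^{k+1}\|_{2}^{2}+\rho\|B(z^{k+1}-z^{k})\|_{2}^{2}\big)\le V^{0}<\infty$, so the summands vanish; hence $r^{k}\to0$ and $B(z^{k+1}-z^{k})\to0$, and the latter forces $s^{k}\to0$, giving \textbf{residual convergence}. Since $V^{k}$ is nonnegative and nonincreasing, the sequences $y^{k}$ and $Bz^{k}$ are bounded; passing to the limit in the stationarity conditions with $r^{k}\to0$ and $s^{k}\to0$ identifies every limit point as a KKT point, which under Assumption~\ref{as:admm2} pins down $y^{k}\to y^{*}$, the \textbf{dual variable convergence}. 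Finally, \textbf{objective convergence} $f(x^{k})+g(z^{k})\to p^{*}$ follows by sandwiching $f(x^{k})+g(z^{k})-p^{*}$ between an upper and a lower bound, each controlled by $\|r^{k}\|_{2}$ and $\|B(z^{k}-z^{k-1})\|_{2}$, both of which tend to zero.
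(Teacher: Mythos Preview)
The paper does not prove this theorem at all: it is quoted verbatim from \cite{Boyd:2011bw} and used as a black box, so there is no in-paper argument to compare against. Your outline is precisely the Lyapunov argument from Appendix~A of that reference---the stationarity relations from the two subproblem minimizations, the energy $V^{k}=\tfrac{1}{\rho}\|y^{k}-y^{*}\|_{2}^{2}+\rho\|B(z^{k}-z^{*})\|_{2}^{2}$, the decrease inequality $V^{k+1}\le V^{k}-\rho\|r^{k+1}\|_{2}^{2}-\rho\|B(z^{k+1}-z^{k})\|_{2}^{2}$, and summability of the residuals---so in substance you have reproduced the cited proof.

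One point to tighten: your dual-convergence step is incomplete as written. Boundedness of $y^{k}$ plus the fact that every cluster point satisfies the KKT conditions does not by itself force convergence of the full sequence when the dual optimum is not unique. The standard fix is the Fej\'er-monotonicity observation: once a cluster point $(\hat y,B\hat z)$ is shown to be optimal, redefine $V^{k}$ with $(\hat y,\hat z)$ as the reference; the same decrease inequality holds, $V^{k}$ is still nonincreasing, and along the subsequence it tends to zero, hence $V^{k}\to0$ along the whole sequence and $y^{k}\to\hat y$. With that addition your sketch is complete.
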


\subsection{Decomposition of Stochastic Optimal Control}
As the optimal control problem is assumed to take place over a compact state
space, the domain of \eqref{eq:HJB-linear} may decomposed into finitely many
regions $\mathcal{R}_j\subseteq\mathbb{R}^n$, $j=1,\ldots,N_R$. Assuming the
pairwise boundary between the regions may be described in terms of a
semialgebraic set, we have the following result,
\begin{theorem}\label{thm:posit_border}
Given desirability function $\Psi_{i}(x)$ valid on region $\mathcal{R}_i$,
$\Psi_{j}(x)$ valid on region $\mathcal{R}_j$, and shared boundary $\xi=\left\{
x\mid h(x) = 0\right\}$ between $\mathcal{R}_i$ and $\mathcal{R}_j$, we have
$\Psi_{i}(x)=\Psi_{j}(x)$ on $\xi$ if there exists $c(x)\in \mathbb{R}[x]$ such
that 
\[
	\Psi_{i}(x)-\Psi_{j}(x)+c(x)h(x)=0
\]
\end{theorem}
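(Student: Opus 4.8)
The plan is to prove the claim by direct substitution, observing that the hypothesis is a polynomial \emph{identity} and then exploiting the defining property of the boundary $\xi$. The equation $\Psi_i(x)-\Psi_j(x)+c(x)h(x)=0$ is to be read as an equality of polynomials, hence it holds at every point $x\in\mathbb{R}^n$, and in particular at every point of the shared boundary.

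First I would fix an arbitrary point $x\in\xi$. By the definition $\xi=\{x\mid h(x)=0\}$, the boundary polynomial vanishes there, so $h(x)=0$. Since $c(x)$ is a fixed polynomial, the product term satisfies $c(x)h(x)=0$ at that point regardless of the value of $c(x)$. Substituting into the hypothesized identity collapses it to $\Psi_i(x)-\Psi_j(x)=0$, i.e.\ $\Psi_i(x)=\Psi_j(x)$. As $x\in\xi$ was arbitrary, equality holds on all of $\xi$, which is the desired conclusion.

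Conceptually, the statement is the elementary (``easy'') direction of an ideal-membership argument: the condition $\Psi_i-\Psi_j=-c\,h$ asserts that the difference $\Psi_i-\Psi_j$ lies in the ideal generated by $h$, and every element of that ideal necessarily vanishes on the variety $\{h=0\}=\xi$. No converse is claimed, so the deeper machinery guaranteeing the \emph{existence} of such a certificate $c(x)$ (which for real varieties would invoke the real Nullstellensatz and need not hold in full generality) is not required here.

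There is no genuine analytic obstacle in the proof itself; the entire content is the substitution. The significance of the result — and the point meriting care in application — is that it recasts the continuity requirement $\Psi_i=\Psi_j$ across a partition boundary as a single polynomial equation that is \emph{linear} in the unknown coefficients of $\Psi_i$, $\Psi_j$, and the multiplier $c(x)$, since $h$ is fixed. This is exactly the form consumed by the Positivstellensatz and the SDP/ADMM framework developed above. The practical obstacle, then, is not proving the implication but choosing the degree of $c(x)$ large enough that the difference $\Psi_i-\Psi_j$ can in fact be written as a polynomial multiple of $h$, which is what makes the equality enforceable as a finite set of linear coefficient constraints.
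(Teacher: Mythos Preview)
Your proof is correct. The paper itself gives essentially no argument, stating only that the result is ``a straightforward result of the Positivstellensatz'' with a citation. Your direct-substitution argument is in fact more elementary and more accurate than that gloss: as you correctly observe, the stated implication is the trivial direction and needs no Positivstellensatz or Nullstellensatz machinery at all---the polynomial identity $\Psi_i-\Psi_j=-c\,h$ vanishes on $\{h=0\}$ by evaluation. The heavier algebraic tools would only be relevant for the unstated converse (existence of the multiplier $c$), which you rightly flag as the practically delicate point when fixing degrees.
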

\begin{proof}
A straightforward result of the Positivstellensatz, see~\cite{Prajna:2003vr}
for details.
\end{proof}
Similarly, continuity of the $n$-th derivative may be easily incorporated as
well by imposing equality of the derivative along the boundary.

\subsection{Two Region Explicit Example}
In the following analysis, we demonstrate how this result can be used to bind
together optimization problems over a decomposed domain. To obtain a useful
policy, we will require the combined policy to be $C^1$ continuous.

For clarity, we examine a pair of bordering partitions $\mathcal{R}_1$ and
$\mathcal{R}_2$, with shared boundary $h(x)$. The polynomials are assumed to be
of bounded degrees, with $\Deg(\Psi_i(x))$ bounded by $d$ and $\Deg(c_i(x))$ by
$d-k$, for all $i,j$. In this case,
\begin{align*}
	\Psi_1(x) &= \alpha_0 + \alpha_1 x + \cdots + \alpha_d x^d\\
	\Psi_2(x) &= \beta_0 + \beta_1 x + \cdots + \beta_d x^d\\
	c_1(x) &= \theta_0 + \theta_1 x + \cdots + \theta_{d-k} x^{d-k}\\
	c_2(x) &= \mu_0 + \mu_1 x + \cdots + \mu_{d-k} x^{d-k},
\end{align*}
where $h(x) = \rho_0 + \rho_1 x + \cdots + \rho_k x^k$ defines the boundary
region. The continuity constraint 
\[
	\Psi_1(x) - \Psi_2(x) + c_{1}(x) h(x) = 0
\]
is equivalent to the coefficient matching constraints
\begin{align*}
	0 &= \alpha_0 - \beta_0 + (\theta_0\rho_0) \\
	0 &= \alpha_1 - \beta_1 + (\theta_0\rho_1 + \theta_1\rho_0) \\
	0 &= \alpha_2 - \beta_2 + (\theta_0\rho_2 + \theta_1\rho_1  + \theta_2\rho_0)\\
	  & \quad\quad\quad\vdots \\
	0 &= \alpha_d - \beta_d + (\theta_{d-k}\rho_k).
\end{align*}
Note that the coefficient matching constraints are affine in the decision
variables $\alpha_i$, $\beta_i$, $i=1,\ldots,d$, and $\theta_j$, $\mu_j$,
$j=1,\ldots,d-k$. The derivative constraint~\eqref{eq:ex1-d-cons} appends
additional coefficient matching constraints,
\begin{align*}
	0 &= \alpha_1 - \beta_1 + (\mu_0\rho_0) \\
	0 &= 2\alpha_2 - 2\beta_2 + (\mu_0\rho_1 + \mu_1\rho_0) \\
	0 &= 3\alpha_2 - 3\beta_2 + (\mu_0\rho_2 + \mu_1\rho_1  + \mu_2\rho_0)\\
	  & \quad\quad\quad\vdots \\
	0 &= d\alpha_d - d\beta_d + (\mu_{d-k}\rho_k).
\end{align*}
Continuity of higher order derivatives are incorporated similarly. The
continuity and derivative coefficient matching constraints, together with the
approximation error constraint~\eqref{eq:gam_min}, can be aggregated into
matrix form,
\[
	A^{(1)}z_1 + A^{(2)}z_2 = 0,
\]
where $z_1=(\alpha_0,\ldots,\theta_{d-k},\gamma_1)$ are the coefficients
associated with $\mathcal{R}_1$, and $z_2=(\beta_0,\ldots,\mu_{d-k},\gamma_2)$
are the coefficients associated with $\mathcal{R}_2$. It is now straightforward
to incorporate the affine matrix constraint into a dual decomposition scheme.
The decomposed variant of optimization \eqref{eq:sos_pde} is
\begin{flalign}
\mbox{min.}\,\, & \gamma_1 + \gamma_2 \\
\mbox{s.t.}\,\, & \frac{1}{\lambda} q \Psi_{1}\ge \partial_t \Psi_1+ L(\Psi_1),\quad x\in\mathcal{R}_1\\
 &  \frac{1}{\lambda} q \Psi_{2}\ge \partial_t \Psi_2+ L(\Psi_2) ,\quad x\in\mathcal{R}_2\\
 & \gamma_{1}-\left(\frac{1}{\lambda} q \Psi_{1}-rhs\right)\ge0,\quad x\in\mathcal{R}_1\\
 & \gamma_{2}-\left(\frac{1}{\lambda} q \Psi_{2}-rhs\right)\ge0,\quad x\in\mathcal{R}_2\\
 & \Psi_{1}(x)-\Psi_{2}(x)+c_{1}(x)x=0\label{eq:ex1-v-constraint}\\
 & \frac{\partial\Psi_{1}}{\partial x}(x)-\frac{\partial\Psi_{2}}{\partial x}(x)+c_{2}(x)x=0\label{eq:ex1-d-cons}\\
 & \gamma_1 = \gamma_2\label{eq:gam_min}
\end{flalign}
where the Positivstellensatz is used to enforce the domain restrictions
(see~\cite{Horowitz:2014tu} for details).  The coupling constraints
\eqref{eq:ex1-v-constraint} and \eqref{eq:ex1-d-cons} prevent decomposition
into two parallel optimizations. In addition, the objective is coupled through
the equality constraint \eqref{eq:gam_min}, which ensures that the maximum
pointwise approximation error over any region is no more than
$\gamma^\mathrm{max}=\gamma_1=\gamma_2$.

To wit, define the quadratically penalized Lagrangian
\begin{multline*}
	L_\rho(\gamma_1,z_1,\gamma_2,z_2,\lambda) 
	= \gamma_1 + \gamma_2 + \mathcal{I}_{\mathcal{C}_1}(z_1) + \mathcal{I}_{\mathcal{C}_2}(z_2) + \\
	+ \lambda^T(A^{(1)}z_1 + A^{(2)}z_2)
	+ \frac{\rho}{2} \left\|A^{(1)}z_1 + A^{(2)}z_2\right\|_2^2,
\end{multline*}
where $\mathcal{I}_{\mathcal{C}_i}(z_i)$ is the indicator function of the
optimization problem over each individual partition, obtained by reduction of
\eqref{eq:sos_pde} to semidefinite program form~\cite{Parrilo:2000ui}. The
alternating direction iteration may then be performed as
\begin{align}\label{eq:final_admm}
	(\gamma_1^{k+1}, z_1^{k+1}) &:= \arg\min_{\gamma_1,z_1} L_\rho(\gamma_1,z_1,\gamma_2^k,z_2^k,\lambda^k)\\
	(\gamma_2^{k+1}, z_2^{k+1}) &:= \arg\min_{\gamma_2,z_2} L_\rho(\gamma_1^{k+1},z_1^{k+1},\gamma_2,z_2,\lambda^k)\\
	\lambda^{k+1} &:= \lambda^k + \rho(A^{(1)}z_1^{k+1} + A^{(2)}z_2^{k+1}).
\end{align}

The above procedure may be repeated for all partitions $\mathcal{R}_i$ and
$\mathcal{R}_j$ that share a common boundary. Each minimization, a semidefinite
program, is taken over only those constraints associated with the specified
region. This achieves a degree of decoupling, limiting the size of the
polynomial optimization problem, and thus the semidefinite program, for each
individual partition.

%The quadratic penalty term in the augmented Lagrangian has stabilizing numerical properties and the scheme is guaranteed to converge for a large class of problems. If the alternate directions are not taken, as in the following modified iteration, 
%\begin{align*}
%	(\gamma_1^{k+1}, z_1^{k+1}) &:= \arg\min_{\gamma_1,z_1} L_\rho(\gamma_1,z_1,\gamma_2^k,z_2^k,\lambda^k)\\
%	(\gamma_2^{k+1}, z_2^{k+1}) &:= \arg\min_{\gamma_2,z_2} L_\rho(\gamma_1^k,z_1^k,\gamma_2,z_2,\lambda^k)\\
%	\lambda^{k+1} &:= \lambda^k + \rho(A^{(1)}z_1^{k+1} + A^{(2)}z_2^{k+1}),
%\end{align*}
%then the first two minimizations can be done in parallel, each over its appropriate region. However the scheme is no longer guaranteed to converge. 

\subsection{Parallelization}
A further decoupling may be achieved through a judicious choice of domain
partitions. This idea is well known in the partial differential equation
community~\cite{bjorstad2004domain}. Suppose partitions $\mathcal{R}_i$ and
$\mathcal{R}_j$ share no common border $h_{i,j}(x)$. As variables from disjoint
partitions are only shared through the common boundary constraints
\eqref{eq:ex1-v-constraint}, it is straightforward to see that $z_i^{k+1}$ and
$z_j^{k+1}$ are independent of one another. This allows for these optimizations
to be performed in parallel. One valid partition is to decompose the domain into
a checkerboard pattern, separating the domain into shaded and unshaded tiles.
As shaded tiles share no optimization variables with one another, they may be
optimized in parallel, and similar with the unshaded. By alternating between
shaded and unshaded, the correct descent direction continues to be taken,
guaranteeing convergence. See~\cite{Parikh:2014} for a detailed discussion of
parallelization ideas, and Fig.~\ref{fig:checkerboard} for an illustration of
this beneficial decomposition pattern.

\section{Analysis}
\label{sec:analysis}

%We briefly highlight the following results, which demonstrate that the domain
%decomposition presented here is well motivated theoretically.

%\begin{theorem}
%The domain partitioned sum of squares ADMM program iterates
%\eqref{eq:final_admm} satisfy the \textbf{Residual convergence},
%\textbf{Objective convergence}, and \textbf{Dual variable convergence}
%properties of Theorem \ref{thm:admm_convergence}.
%\end{theorem}
%\begin{proof} \textit{(Sketch)}
%As sums of squares programs are known to be convex, the optimization variables
%$\left\{ z_i \right\}$ lie in a convex set $\mathcal{C}_i$. To put this in the
%standard ADMM form \eqref{eq:admm_standard_form}, we augment the objective of
%each individual optimization, $\gamma_i$, with the indicator function of this
%convex set $\mathcal{I}_{\mathcal{C}_i}$. The objective for each domain then
%satisfies Assumption \ref{as:admm1}. 
%
%As the domain decomposition problem is simply a standard sum of squares, and
%therefore semidefinite, program, the unaugmented Lagrangian has a saddle
%point, satisfying Assumption \ref{as:admm2}.
%\end{proof}

A benefit of the sum of squares relaxation approach is that the solutions
produced are guaranteed to be upper and lower bounds (depending on the
direction of the inequality \eqref{eq:relax_hjb}) when performed over a single
partition~\cite{Horowitz:2014tu}. These guarantees are retained in the domain
decomposition setting.

\begin{theorem} \label{thm:Psi_bound}
Given a solution set $\left\{ \Psi_i,\gamma_i \right\} $ to the converged
optimization problem \eqref{eq:final_admm} where $C^2$ continuity is
enforced, and if $\Psi^*$ is the solution to \eqref{eq:HJB-linear}, then
$\Psi(x) \ge \Psi^{*}(x)$ for all $x\in\mathcal{R}_i$.
\end{theorem}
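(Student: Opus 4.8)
The plan is to recognize the relaxation inequality \eqref{eq:relax_hjb} as the statement that each local $\Psi_i$ is a \emph{supersolution} of the linear operator underlying \eqref{eq:HJB-linear}, while $\Psi^*$ is an exact solution, and then to invoke a comparison (weak maximum) principle. Concretely, write the governing operator as $\mathcal{L}\Phi := \partial_t\Phi + L(\Phi) - \tfrac{1}{\lambda}q\Phi$, so that \eqref{eq:HJB-linear} reads $\mathcal{L}\Psi^* = 0$, whereas the converged constraints of \eqref{eq:final_admm} certify $\mathcal{L}\Psi_i \le 0$ pointwise on each region $\mathcal{R}_i$. Since $\mathcal{L}$ is linear, the difference $w := \Psi - \Psi^*$ satisfies $\mathcal{L}w = \mathcal{L}\Psi \le 0$.

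Next I would establish the boundary sign condition. On $\partial\mathbb{S}$ the relaxed program \eqref{eq:sos_pde} enforces $\Psi \ge e^{-\phi_T(x)/\lambda}$, while the log-transform \eqref{eq:log-transform} of the terminal cost gives $\Psi^* = e^{-\phi_T(x)/\lambda}$ there; hence $w \ge 0$ on $\partial\mathbb{S}$. The crucial structural fact is that the zeroth-order coefficient of $\mathcal{L}$ is $-\tfrac{1}{\lambda}q$, which is nonpositive because $q \ge 0$ and $\lambda > 0$. For such an operator the weak maximum principle applies: a supersolution that is nonnegative on the (parabolic) boundary is nonnegative throughout, giving $w \ge 0$, i.e. $\Psi \ge \Psi^*$. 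Equivalently, one may argue probabilistically: $L$ is the generator of the passive diffusion $dx_t = f(x_t)\,dt + B(x_t)\,d\omega_t$, and It\^o's formula shows that $\exp\!\big(-\tfrac{1}{\lambda}\int_0^t q(x_s)\,ds\big)\,\Psi(x_t)$ is a supermartingale whenever $\mathcal{L}\Psi \le 0$; taking expectations up to the exit time and using the Feynman--Kac representation of $\Psi^*$ together with the boundary inequality recovers $\Psi(x_0) \ge \Psi^*(x_0)$.

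The remaining point, and the place where the $C^2$ hypothesis is essential, is to upgrade the per-region inequalities to a single global supersolution on $\mathbb{S} = \bigcup_i \mathcal{R}_i$. Each $\Psi_i$ satisfies $\mathcal{L}\Psi_i \le 0$ only on the open region $\mathcal{R}_i$; to conclude the same for the glued function $\Psi$ I must ensure that the interface-matching constraints of Theorem~\ref{thm:posit_border}, imposed here up to second order, make $\Psi$ genuinely $C^2$ across each shared boundary $\xi = \{h(x)=0\}$. Because $\mathcal{L}$ contains the Hessian term $\tfrac{1}{2}\Tr((\nabla_{xx}\Psi)\Sigma_t)$, continuity of $\Psi$ and $\nabla_x\Psi$ alone ($C^1$, as used for the policy) would permit a jump in the normal second derivative across $\xi$, so that $\mathcal{L}\Psi \le 0$ could fail there; matching $\nabla_{xx}\Psi$ removes the jump, making $\mathcal{L}\Psi$ continuous so that the inequality extends from each region interior to the shared boundary by continuity. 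This is exactly why the statement requires $C^2$ rather than $C^1$, and why, probabilistically, no interface local-time corrections appear in It\^o's formula.

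I expect the main obstacle to be regularity and degeneracy rather than the algebra. The diffusion matrix $\Sigma_t = B\Sigma_\epsilon B^T$ is only positive semidefinite, so $\mathcal{L}$ may be degenerate parabolic and the classical weak maximum principle (cleanest under uniform ellipticity) must be replaced by a comparison principle valid in the degenerate setting, or by the supermartingale argument above, which needs no nondegeneracy but does require $\Psi^* \in C^2$ (or a viscosity-solution formulation) for Feynman--Kac. Pinning down that $\Psi^*$ has enough regularity for whichever route is chosen, and that the exit time is almost surely finite so the stochastic representation is valid, are the technical details I would need to settle to make the sketch rigorous.
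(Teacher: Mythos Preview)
Your proposal is correct and follows essentially the same approach as the paper: recognize the relaxed $\Psi_i$ as a supersolution, invoke the elliptic/parabolic maximum (comparison) principle, and observe that $C^2$ matching across partition boundaries is precisely what is needed for the glued function to remain a classical supersolution. The paper's proof is only a sketch that defers to \cite{Horowitz:2014tu} and notes the $C^2$ requirement at the interfaces; your write-up fills in the operator-level details, the boundary sign check, and the role of the sign of the zeroth-order coefficient, and additionally offers a probabilistic (supermartingale/Feynman--Kac) variant and a discussion of degeneracy---all of which go beyond the paper but are consistent with its intended argument.
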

\begin{proof} \textit{(Sketch)} The derivation follows the proof of Theorem~5
in \cite{Horowitz:2014tu} with little modification. The only modification
arises from the fact that the elliptic and parabolic maximum principles rely on
$C^2$ continuity of the super-solution. As the solution is polynomial
on the interior of each boundary, and therefore infinitely differentiable, this
requirement needs only be enforced explicitly along the partition boundaries.
\end{proof}

A benefit of this approach is that not only may an upper bound be computed, but
in fact reversing the inequalities of the optimization results in an additional
optimization problem that can be used to find a pointwise lower bound to the
underlying optimal solution. As both upper and lower bounds are available, it
is possible to see the maximal possible error of the solution.
See~\cite{Horowitz:2014tu} for details.

\begin{figure}[t]
\centering
	\vspace{3mm}
    \begin{tikzpicture}[thick]
        \def \sqwid {1.33cm}
        \def \sqhig {1cm}
        \tikzstyle{shad}=[blue!30!white, draw=blue!50!white]
        \tikzstyle{unshad}=[white, draw=blue!50!white]
        \tikzstyle{rnode}=[anchor=north west, text=black!80!white]
        \foreach \i in {0,1} {
            \foreach \j in {0,1} {
                \filldraw[unshad]
                    (2*\j*\sqwid,2*\i*\sqhig)
                        rectangle
                    (2*\j*\sqwid+\sqwid, 2*\i*\sqhig+\sqhig);
                \filldraw[shad]
                    (2*\j*\sqwid+\sqwid, 2*\i*\sqhig)
                        rectangle
                    (2*\j*\sqwid+2*\sqwid,2*\i*\sqhig+\sqhig);
            }
            \foreach \j in {0,1} {
                \filldraw[shad]
                    (2*\j*\sqwid,2*\i*\sqhig+\sqhig)
                        rectangle
                    (2*\j*\sqwid+\sqwid, 2*\i*\sqhig+2*\sqhig);
                \filldraw[unshad]
                    (2*\j*\sqwid+\sqwid, 2*\i*\sqhig+\sqhig)
                        rectangle
                    (2*\j*\sqwid+2*\sqwid,2*\i*\sqhig+2*\sqhig);
            }
        }
        \node[rnode] at (0,4*\sqhig)
            {\tiny $\mathcal{R}_1$};
        \node[rnode] at (\sqwid,4*\sqhig)
            {\tiny $\mathcal{R}_2$};
        \node[rnode] at (2*\sqwid,4*\sqhig)
            {\tiny $\mathcal{R}_3$};
        \node[rnode] at (0,3*\sqhig)
            {\tiny $\mathcal{R}_5$};
        \node[rnode] at (\sqwid,3*\sqhig)
            {\tiny $\mathcal{R}_6$};
        \node[rnode] at (0*\sqwid,2*\sqhig)
            {\tiny $\mathcal{R}_9$};
        \node[rnode] at (3*\sqwid,\sqhig)
            {\tiny $\mathcal{R}_{16}$};
        \node at (2*\sqwid+0.5*\sqwid,4*\sqhig-0.5*\sqhig) {$\cdots$};
        %\node at (2*\sqwid+0.5*\sqwid,3*\sqhig-0.5*\sqhig) {$\cdots$};
        \node at (0*\sqwid+0.5*\sqwid,2*\sqhig-0.5*\sqhig) {$\vdots$};
        %\node at (1*\sqwid+0.5*\sqwid,2*\sqhig-0.5*\sqhig) {$\vdots$};
        \node at (2*\sqwid+0.5*\sqwid,2*\sqhig-0.5*\sqhig) {$\ddots$};
        \node at (0*\sqwid+0.5*\sqwid,4*\sqhig-0.5*\sqhig)
            {\scriptsize $\Psi_1(x)$};
        \node at (1*\sqwid+0.5*\sqwid,4*\sqhig-0.5*\sqhig)
            {\scriptsize $\Psi_2(x)$};
        \node at (0*\sqwid+0.5*\sqwid,3*\sqhig-0.5*\sqhig)
            {\scriptsize $\Psi_5(x)$};
        \node at (1*\sqwid+0.5*\sqwid,3*\sqhig-0.5*\sqhig)
            {\scriptsize $\Psi_6(x)$};
        \node at (3*\sqwid+0.5*\sqwid,1*\sqhig-0.5*\sqhig)
            {\scriptsize $\Psi_{16}(x)$};
        \draw[very thick] (0,3*\sqhig) -- (\sqwid,3*\sqhig);
        \draw[very thick] (\sqwid,3*\sqhig) -- (\sqwid,4*\sqhig);
        \coordinate(h15) at (0.5*\sqwid,3*\sqhig);
        \coordinate(h12) at (1*\sqwid,3.5*\sqhig);
        \draw[-latex]
            (2.1*\sqwid,2.5*\sqhig) node[anchor=west] {\scriptsize$h_{1,2}(x)$}
            to[out=180,in=-15] (h12.east);
        \draw[-latex]
            (1.1*\sqwid,1.5*\sqhig) node[anchor=west] {\scriptsize$h_{1,5}(x)$}
            to[out=120,in=-30] (h15.south);
    \end{tikzpicture}
    \caption{A particular grid domain decomposition with the partitions grouped
        into shaded and unshaded sets. As the sets of the same color require no
        consensus over their local variables, it is possible to perform the
        optimization over each set in parallel while maintaining the
        convergence properties of ADMM.}
    \label{fig:checkerboard}
\end{figure}
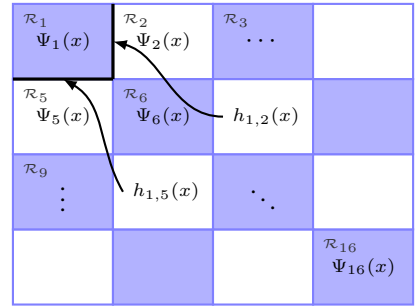

%%%%%%%%%%%%%%%%%%%%%%%%%%%%%%%%%%%%%%%%%%%%%%%%%%%%%%%%%%%%%%%%%%%%%%%%%%%%%%%%
\section{Scalar Example}
\label{sec:illustrative_example}

We construct the optimization for a simple scalar example for illustrative
purposes. Consider the one dimensional system
\[
	dx=(x^{2}+u)\, dt + d\omega
\]
on the domain $x\in[-1,1]$. We have state cost $q(x)=1$, control cost
$R=1$, and parameter $\lambda=1$. We split the domain into regions
$\mathcal{R}_{1}=\left\{ x\mid x\in[-1,0]\right\} $, $\mathcal{R}_{2}=\left\{
x\mid x\in[0,1]\right\} $, creating $h_{1,2}(x)=x$.  For each of these problems
we form the optimization (\ref{eq:sos_pde}) on $\mathcal{R}_{1}$,
$\mathcal{R}_{2}$ independently.  To enforce equality of both the solution and
its derivative at the shared point $x=0$ we add the coupling constraints
\begin{align*}
\Psi_{1}(x)-\Psi_{2}(x)+c_{1}(x)x &= 0\\
\frac{\partial\Psi_{1}}{\partial x}(x)-\frac{\partial\Psi_{2}}{\partial x}(x)+c_{2}(x)x &= 0.
\end{align*}
To enforce the continuity constraint~\eqref{eq:ex1-v-constraint} for the point
boundary at the origin, it suffices to match the constant coefficients of
$\Psi_1$ and $\Psi_2$, i.e., we require $\Psi_1(0) = \Psi_2(0)$. This is an
affine constraint when the polynomial optimization is passed to an SDP. 

%To be explicit, the problem~\eqref{eq:sos_pde} is equivalent to
%\begin{flalign}
%\mbox{min.}\,\, & \gamma_1 + \gamma_2 \\
%\mbox{s.t.}\,\, & \Psi_{1}\ge x^{2}(\nabla_{x}\Psi_{1})+\frac{1}{2}\left(\nabla_{xx}\Psi_{1}\right),\quad x\in\mathcal{R}_1\\
% & \Psi_{2}\ge x^{2}(\nabla_{x}\Psi_{2})+\frac{1}{2}\left(\nabla_{xx}\Psi_{2}\right),\quad x\in\mathcal{R}_2\\
% & \gamma_{1}-\left(\Psi_{1}-rhs\right)\ge0,\quad x\in\mathcal{R}_1\\
% & \gamma_{2}-\left(\Psi_{2}-rhs\right)\ge0,\quad x\in\mathcal{R}_2\\
% & \Psi_{1}(x)-\Psi_{2}(x)+c_{1}(x)x=0\label{eq:ex1-v-constraint}\\
% & \frac{\partial\Psi_{1}}{\partial x}(x)-\frac{\partial\Psi_{2}}{\partial x}(x)+c_{2}(x)x=0\label{eq:ex1-d-cons}\\
% & \Psi_{1}\ge0,\quad x\in\mathcal{R}_1\\
% & \Psi_{2}\ge0,\quad x\in\mathcal{R}_2\\
% & \gamma_1 = \gamma_2\label{eq:gam_min}
%\end{flalign}
%where the Positivstellensatz is used to enforce the domain restrictions.  The
%coupling constraints \eqref{eq:ex1-v-constraint} and \eqref{eq:ex1-d-cons}
%prevent decomposition into two parallel optimizations. In addition, the
%objective is coupled through the equality constraint \eqref{eq:gam_min}, which
%ensures that the maximum pointwise approximation error over any region is no
%more than $\gamma=\gamma_1=\gamma_2$.

Numerical results for the one dimensional example are shown in
Fig.~\ref{fig:admm_iterations} and Fig.~\ref{fig:admm_gamma}. For simplicity,
the conditioning parameter was set to $\rho=1$, and the polynomial degree bound
to $6$ for each region.  Fig.~\ref{fig:admm_iterations} shows that within about
ten steps of ADMM, continuous differentiability at the boundary region $x=0$ is
achieved. Fig.~\ref{fig:admm_gamma} shows the evolution of the dual variables,
as well as the maximum approximation gap with iteration number. The SDP
optimization on each region was carried out on SDPT3 using YALMIP with the Sum
of Squares module~\cite{Lofberg:2009}.

\begin{figure}
	\psfrag{Iterations}[c][c]{Evolution of $\Psi_1(x)$ and $\Psi_2(x)$}
	\psfrag{x}[t][c]{\footnotesize $x$}
	\psfrag{Psi}[b][c]{\footnotesize $\Psi(x)$}
    \centering
    \includegraphics[width=0.75\columnwidth]{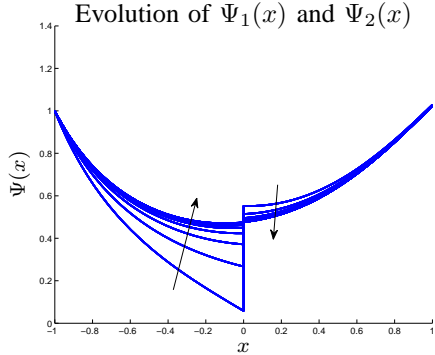}
	\caption{Evolution of the alternative value function over 10 ADMM steps.
	Arrows show direction of evolution.} 
	\label{fig:admm_iterations}
\end{figure}

\begin{figure}
	\psfrag{Dual variable value vs. step}[c][c]{}
	\psfrag{gammamax vs. step}[c][c]{}
	\psfrag{ADMM step}[t][c]{\footnotesize step}
	\psfrag{Price}[b][c]{\footnotesize dual}
	\psfrag{gammamax}[b][c]{\footnotesize $\gamma^\mathrm{max}$}
    \centering
    \includegraphics[width=0.49\columnwidth]{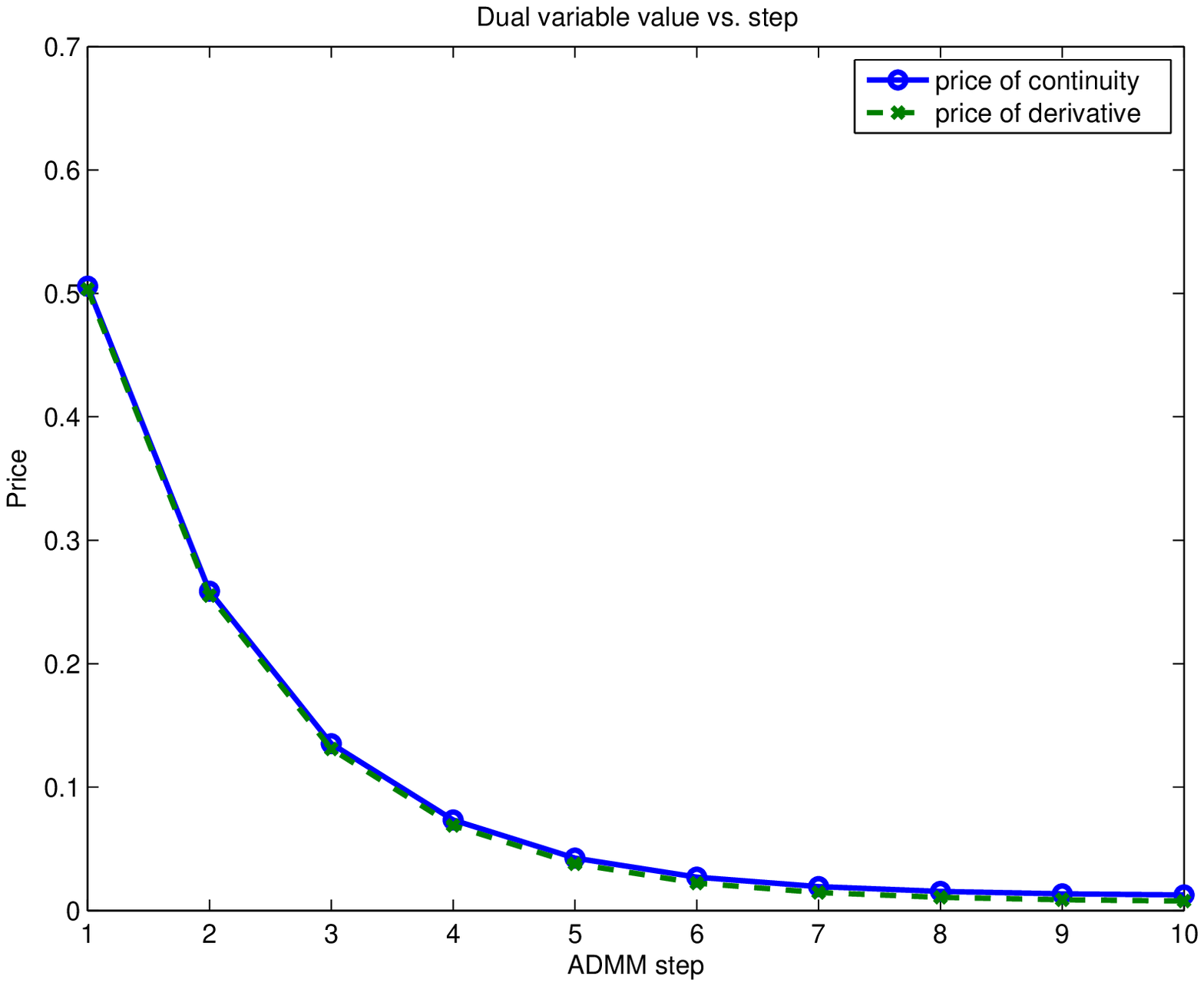}
    \includegraphics[width=0.49\columnwidth]{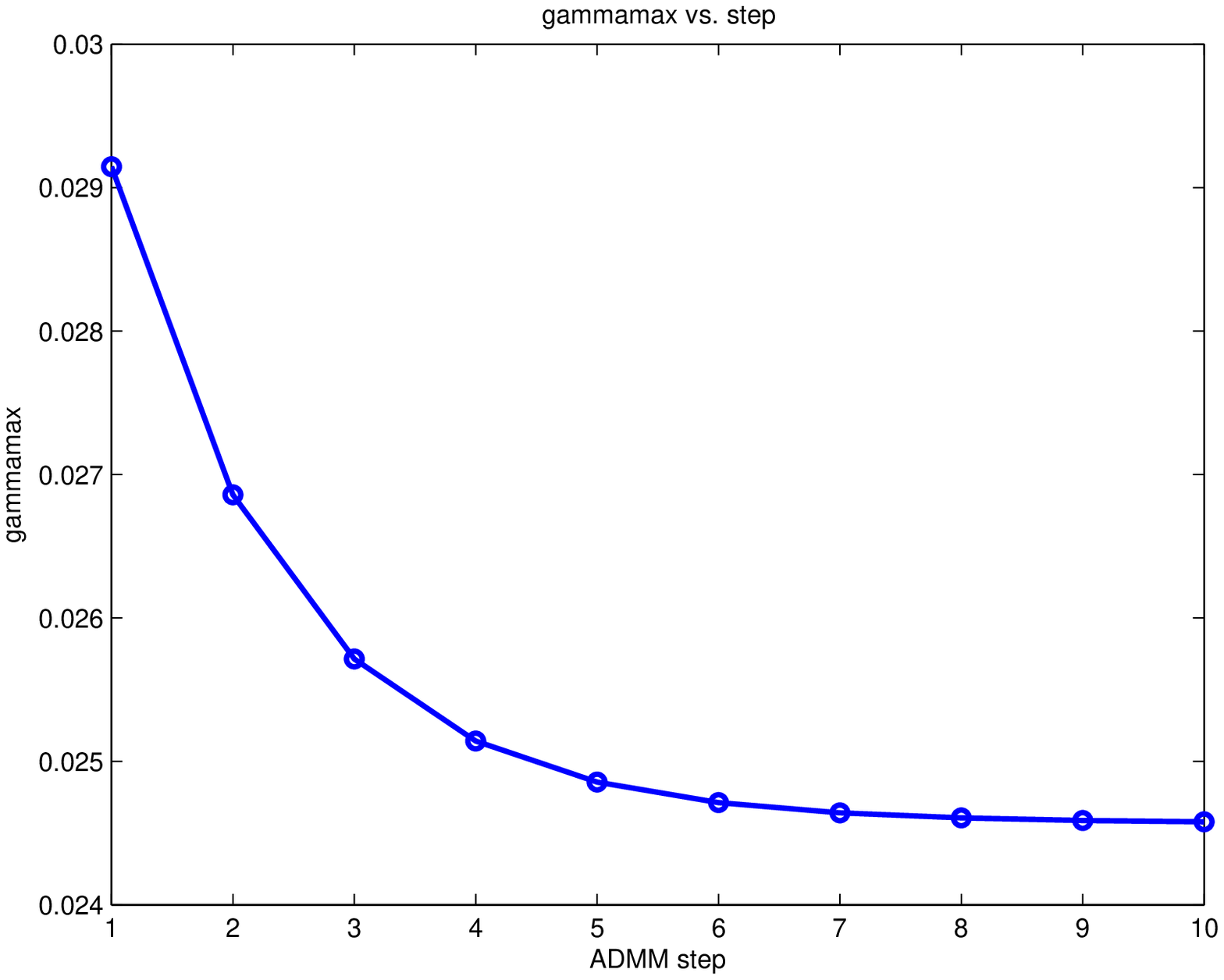}
	\caption{Values of the dual variables (left) and maximum approximation gap
	(right) with iteration number.} 
	\label{fig:admm_gamma}
\end{figure}

%%%%%%%%%%%%%%%%%%%%%%%%%%%%%%%%%%%%%%%%%%%%%%%%%%%%%%%%%%%%%%%%%%%%%%%%%%%%%%%%
\section{Nonlinear Cartesian System}
\label{sec:nonlinear_robot}

\begin{figure*}[thbp]
	\psfrag{x}[c][c]{\footnotesize $x$}
	\psfrag{y}[c][c]{\footnotesize $y$}
	%\psfrag{Psi}[t][b]{\small $\Psi$}
    % remove spacing from subfigure
    \addtolength{\subfigcapskip}{-0.4cm}
    \centerline{
    \subfigure[$C^0$-continuous approximation]{
		\psfrag{Psi}[t][b]{\small $\Psi(x,y)$, $d=8$, $n_r=3$, $\gamma^\mathrm{max}=0.57938$}
        \includegraphics[width=0.7\columnwidth]{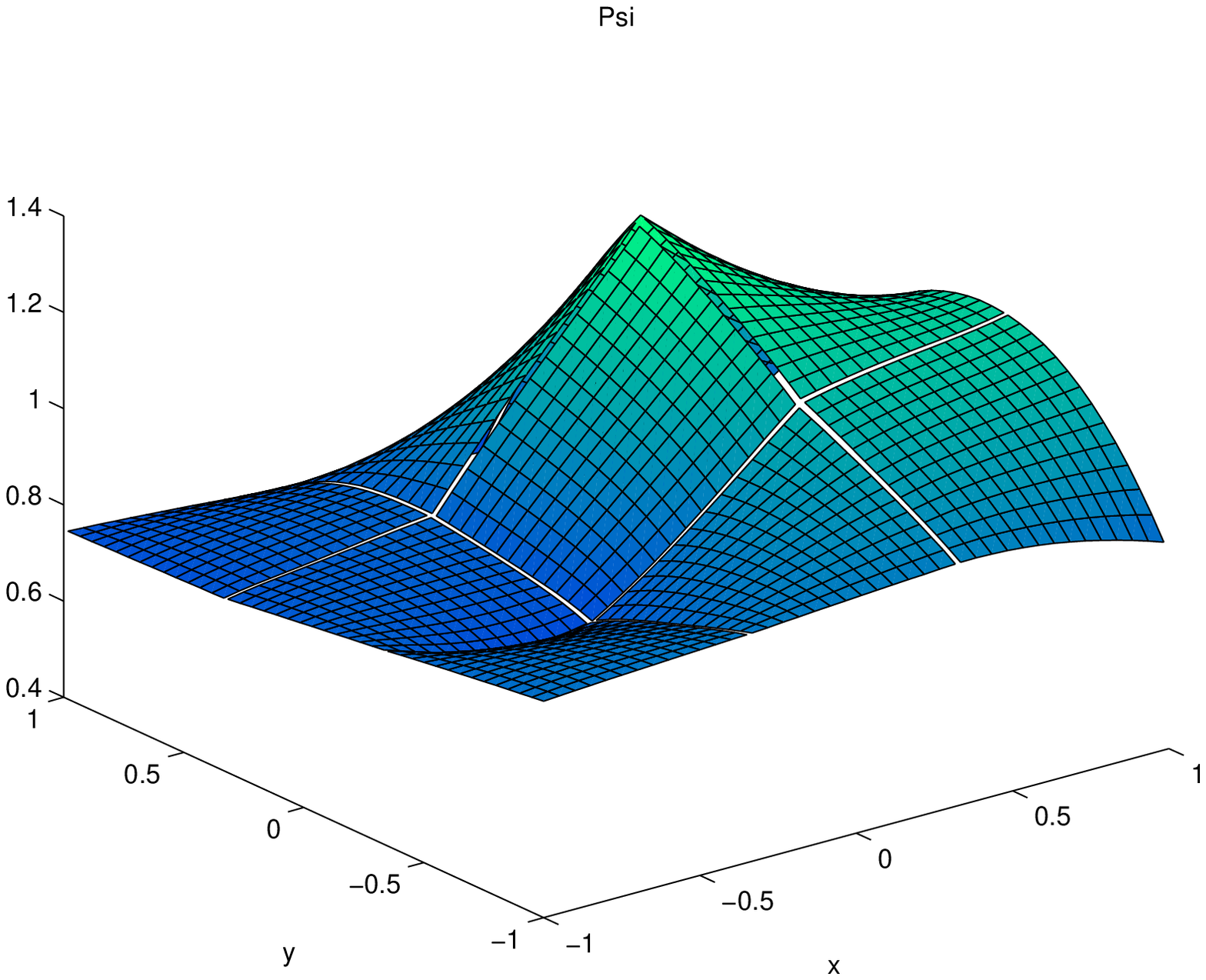}
        \label{fig:d8nr3}
    }
    %\hfill
	\hspace{-5mm}
	\subfigure[$C^1$-continuous approximation]{
		\psfrag{Psi}[t][b]{\small $\Psi(x,y)$, $d=8$, $n_r=3$, $\gamma^\mathrm{max}=0.3123$}
        \includegraphics[width=0.7\columnwidth]{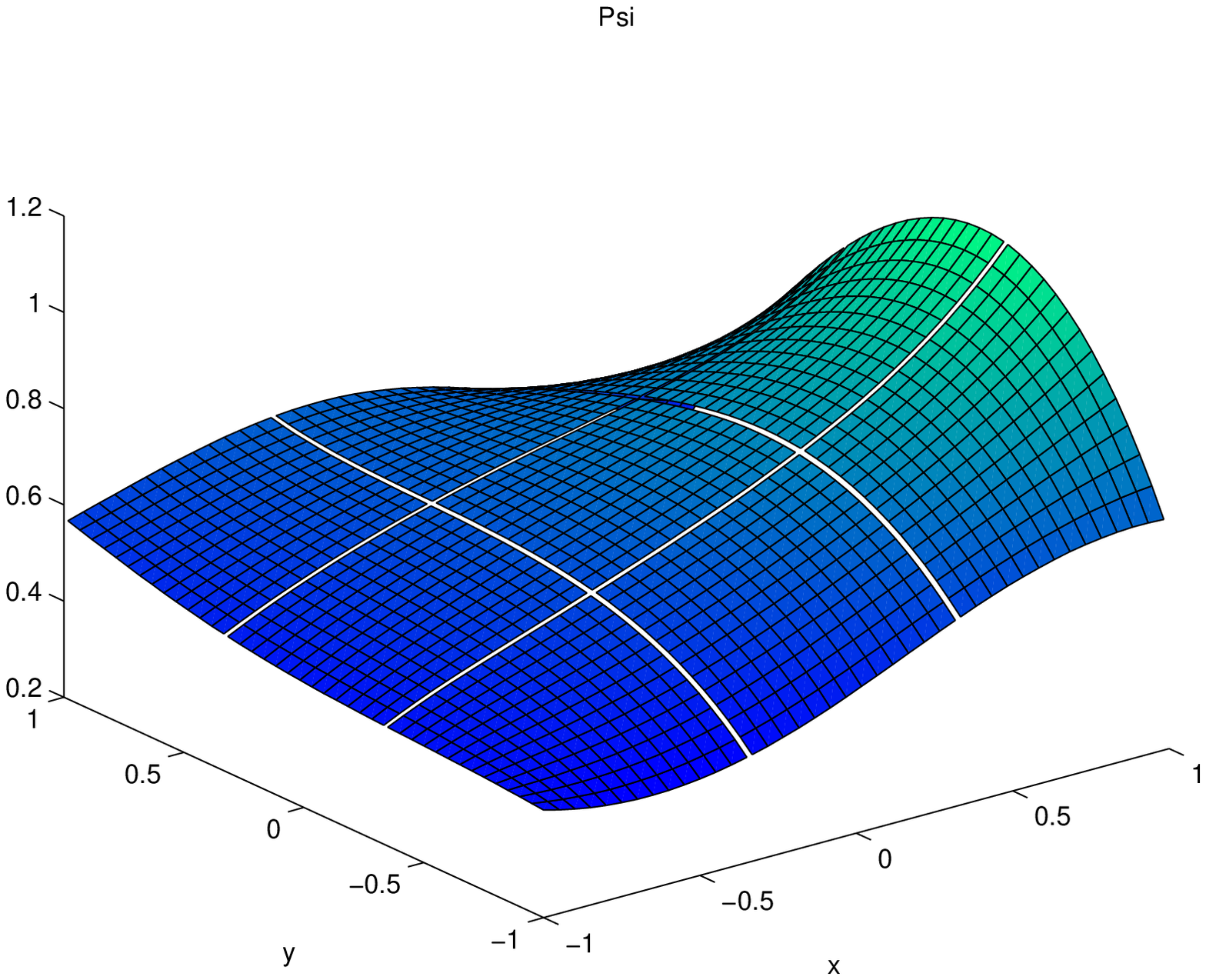}
        \label{fig:d8nr3_withderiv}
	}
	%\hfill
	\hspace{-5mm}
    \subfigure[high fidelity approximation]{
		\psfrag{Psi}[t][b]{\small $\Psi(x,y)$, $d=14$, $n_r=1$, $\gamma^\mathrm{max}=0.0978$}
        \includegraphics[width=0.7\columnwidth]{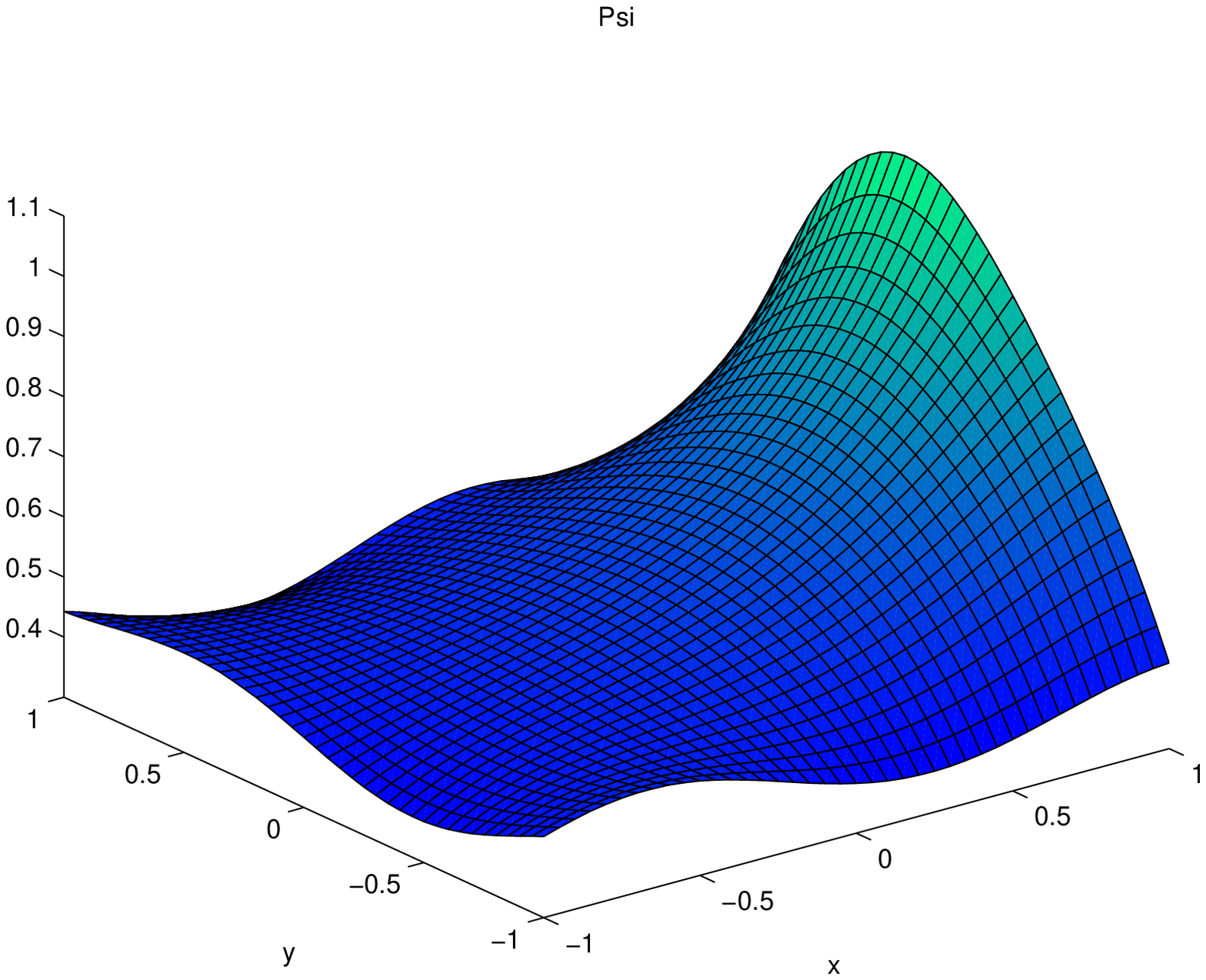}
        \label{fig:d14nr1}
    }
    }
    \caption{Results of multidimensional, nonlinear example.}
	\label{fig:Results-of-multidimensional}
\end{figure*}

To demonstrate the versatility of the method, a nonlinear, multidimensional
problem was solved with the following dynamics,
\[
	\begin{bmatrix}
		dx\\
		dy
	\end{bmatrix}
	=
	\left(0.1
	\begin{bmatrix}
		-2x-x^{3}-5y-y^{3}\\
		6x+x^{3}-3y-y^{3}
	\end{bmatrix}
	+
	\begin{bmatrix}
		u_1\\
		u_2
	\end{bmatrix}
	\right)
	dt
	+
	\begin{bmatrix}
		d\omega_1\\
		d\omega_2
	\end{bmatrix}.
\]

The problem is framed as a first exit problem, with the three sides of a square
domain $\mathbb{S}=[-1,1^2]$ given a unit penalty $\phi(x,y)=1$, while on the
remaining edge at $x=1$ a reward was given for achieving the center of the edge
with $\phi(x,y)=1-(y-1)^2$. Representative alternative value function
approximations are shown in Fig.~\ref{fig:Results-of-multidimensional}. In
Table~\ref{tab:gamma-results} we also summarize the maximum approximation gap
$\gamma^\mathrm{max}$ for a checkerboard decomposition of $\mathbb{S}$ with
$n_r$ regions per dimension, and approximating polynomial degree bound $d$ in
each region.

\begin{table}[!t]
\caption{Slack value $\gamma^\mathrm{max}$ as a function of polynomial degree
$d$, and number of regions $n_r$ per dimension.}
\label{tab:gamma-results}
\begin{tabular}{r|*{6}l}
\hline
 & \multicolumn{6}{c}{$d$}\\
$n_r$ & 4		& 6			& 8			& 10		& 12		& 14\\
\hline
1 & 6.8374	& 2.5085	& 0.6344	& 0.3501	& 0.0804	& 0.0978\\
2 & 6.7065	& 2.1561	& 0.6399	& 0.3642	& 0.0859	&\\
3 &	6.4688	& 2.0579	& 0.5794	& 0.3304	&			&\\
4 &	6.2662	& 2.0689	& 0.5591	& 0.3005	&			&\\
5 &	6.6289	& 1.8812	& 0.5919	& 0.2917	&			&\\
6 &	6.3017	& 1.7638	& 0.5716	&			&			&\\
7 &	6.3178	& 1.6533	& 0.5403	& 			&			&\\
\hline
\end{tabular}
\end{table}

% XXX: this figure has been widened and moved up
%\begin{figure}
%	\psfrag{x}[c][c]{\footnotesize $x$}
%	\psfrag{y}[c][c]{\footnotesize $y$}
%	\psfrag{Psi}[t][b]{\footnotesize $\Psi$}
%	\includegraphics[width=0.49\columnwidth]{admm_2d/fig/twodim_scales_ivan3_d8nr3}
%	\includegraphics[width=0.49\columnwidth]{admm_2d/fig/twodim_scales_ivan3_d14nr1}
%	\caption{Results of multidimensional, nonlinear example. Region-split
%	optimization for $n_r=3$ and $d=8$ (left), and the true alternative value
%	(right)}
%\label{fig:Results-of-multidimensional}
%\end{figure}

% XXX: this is an incorrect figure
%\begin{figure}
%\psfrag{x}[c][c]{$x$}
%\psfrag{y}[c][c]{$y$}
%\psfrag{Psi}[b][t]{$\Psi$}
%\includegraphics[width=0.49\columnwidth]{}
%\includegraphics[width=0.49\columnwidth]{}
%\caption{Results of multidimensional, nonlinear example. On the left is the
%optimization after two iterations of ADMM, while the one on the right is the
%converged results, when using $deg(\Psi)=deg(s_i)=8$, where $s_i$ are the
%Positivstellensatz multiplers. At convergence the upper bound has distance
%$\gamma=0.6321$. \label{fig:Results-of-multidimensional}}
%\end{figure}

%%%%%%%%%%%%%%%%%%%%%%%%%%%%%%%%%%%%%%%%%%%%%%%%%%%%%%%%%%%%%%%%%%%%%%%%%%%%%%%%
\section{Conclusion}
\label{sec:discussion}

A method to perform domain decomposition on stochastic optimal control problems
has been developed, allowing for local polynomial approximations to the
Hamilton Jacobi Bellman equation to be generated in parallel. Of importance is
the fact that the sum of squares relaxation used does not fundamentally rely on
the particular structure of the HJB PDE. In fact,~\cite{Horowitz:2014tu}
demonstrates that the technique may be readily applied to any linear parabolic
or elliptic PDE to obtain guaranteed upper and lower bounds over the domain.
The domain splitting of this work extends as well, allowing for local upper and
lower bounds to any linear PDE to be generated via optimization. While more
involved than existing numerical techniques such as the Finite Element method,
these techniques have formal guarantees that do not require an asymptotic limit
in discretization mesh size.

%%%%%%%%%%%%%%%%%%%%%%%%%%%%%%%%%%%%%%%%%%%%%%%%%%%%%%%%%%%%%%%%%%%%%%%%%%%%%%%%
\addtolength{\textheight}{-7.1cm}   % This command serves to balance the column
                       % lengths on the last page of the document manually. It
                       % shortens the textheight of the last page by a suitable
                       % amount. This command does not take effect until the
                       % next page so it should come on the page before the
                       % last. Make sure that you do not shorten the textheight
                       % too much.
%%%%%%%%%%%%%%%%%%%%%%%%%%%%%%%%%%%%%%%%%%%%%%%%%%%%%%%%%%%%%%%%%%%%%%%%%%%%%%%%

A more direct implication lies in the generation of stabilizing controllers for
nonlinear systems. Until now, there has not existed a method to generate
near-optimal Control Lyapunov Functions for arbitrary nonlinear, stochastic
systems~\cite{Sontag:1983cq}. These domain decomposition techniques improve the
ability for optimal control policies to respond to system dynamics, enlarging
the class of systems that can be handled. Furthermore, existing results on sum
of squares in Lyapunov functions can be used to verify the stability of any
policy produced by these decomposition methods.

\subsection{Future Directions}

It is straightforward to recognize that many domain decompositions, such as the
checkerboard pattern illustrated, produce highly structured sparsity patterns
in the semidefinite program's constraint matrices. Such sparsity structures
have previously been used to significantly improve the computational cost of
large scale semidefinite and sum of squares
programs~\cite{henrion2009gloptipoly, waki2008algorithm}, work that could
easily be applied here as well. It is also an interesting question as to how
sparse basis functions~\cite{Horowitz:hd} might be incorporated into the
domain decomposition approach.

%%%%%%%%%%%%%%%%%%%%%%%%%%%%%%%%%%%%%%%%%%%%%%%%%%%%%%%%%%%%%%%%%%%%%%%%%%%%%%%%
%\bibliographystyle{ieeetr}
\bibliographystyle{IEEEtran}
\bibliography{references}

\end{document}